\def\scfig #1 #2 {\resizebox{#2}{!}{\includegraphics{#1}}}
\numberwithin{equation}{section}
\def\ker{\operatorname{ker}}
\theoremstyle{plain}
\newtheorem{theorem}[equation]{Theorem}
\newtheorem{lemma}[equation]{Lemma}
\theoremstyle{definition}
\newtheorem{definition}[equation]{Definition}
\newtheorem{remark}[equation]{Remark}
\newtheorem{discussion}[equation]{Discussion}
\begin{document}
 \title[The group of quasi-isometries of the real line]{On the center of the group of quasi-isometries of the real line}

\author{Prateep Chakraborty}
\address{Department of mathematics and Statistics, Indian Institute of Science Education and Research kolkata, Mohanpur- 741246, West Bengal, India.}
\email{chakraborty.prateep@gmail.com}

\thanks{The author is supported by an N.B.H.M. postdoctoral fellowship.}
\thanks{Key words : PL-homeomorphisms, quasi-isometry, center of group, centralisers}
\maketitle

\begin{abstract}
Let $QI(\mathbb{R})$ denote the group of all quasi-isometries $f:\mathbb{R}\to \mathbb{R}.$ Let $Q_+( \text{and}~ Q_-)$ denote the subgroup of $QI(\mathbb{R})$ consisting of elements which are identity near $-\infty$ (resp. $+\infty$).   We denote by $QI^+(\mathbb R)$ the index $2$ subgroup of $QI(\mathbb R)$ that fixes the ends $+\infty, -\infty$.   
We show that $QI^+(\mathbb R)\cong Q_+\times Q_-$. Using this we show that the center of the group $QI(\mathbb{R})$ is trivial.  
\end{abstract}

\section{Introduction}
We  begin by recalling the notion of quasi-isometry. 
 
Let $f:(X,d_X)\to (Y,d_Y)$ be a map between two metric spaces. We say that $f$ is a $K$-quasi-isometric embedding if there exists a $K>1$ such that
$$\frac{1}{K} d_X(x_1,x_2)-K\leq d_Y(f(x_1),f(x_2))\leq Kd_X(x_1,x_2)+K~ \forall x_1,x_2\in X.$$
 Again, if for any given point $y\in Y$, there is a point $x\in X$ such that $d_Y(f(x),y)<K,$ then $f$ is said to be a $K$-quasi-isometry. If $f$ is a quasi-isometry (for some $K>1$), then there exists a $K^\prime$- quasi-isometry $f^\prime: Y\to X$ such that $f^\prime\circ f$ (resp. $f\circ f^\prime$) is quasi-isometry, equivalent to the identity map of $X$ (resp. $Y$)(two maps $f,g:X\to Y$ are said to be quasi-isometrically equivalent if there exists a constant $M>0$ such that $d_Y(f(x),g(x))<M~\forall x\in X.$) Let $[f]$ denote the equivalence class of a quasi-isometry $f:X\to X.$ We denote the set of all equivalence classes of self-quasi-isometries of $X$ by $QI(X).$ 
 
It turns out that one has a well-defined notion of composition of isometry classes, where $[f]. [g]:=[f\circ g]$ for $[f], [g]\in QI(X).$
This makes $QI(\mathbb R)$ a group, referred to as the group of quasi-isometries of $(X,d_X)$.  
 If $\phi: X^\prime\to X$, $\phi':X\to X'$ are a pair of inverse quasi-isometries,  then $[f]\mapsto [\phi\circ f\circ \phi']$ defines an isomorphism 
 of groups $QI(X^\prime)\to QI(X).$    For example,  $t\to[t]$ is a quasi-isometry $\mathbb{R}\to \mathbb{Z},$ which induces 
 an isomorphism $QI(\mathbb{R})\cong QI(\mathbb Z)$.
  We refer the reader to \cite[Chapter I.8]{brid} for basic facts concerning quasi-isometry.

It is known that $QI(\mathbb R)$ is a rather large group; see \cite[3.3.B]{gromov-pansu}.    It is also known that the following groups can be embedded in $QI(\mathbb R)$ :-
 (i) $\widetilde{Diff}(\mathbb{S}^1)$ and $\widetilde{PL}(\mathbb{S}^1),$ (ii) $PL_\kappa(\mathbb{R}),$ (iii) the Thompson's group $F,$ (iv) the free group of rank $c,$  the continuoum. For the definitions and the embeddings of the aforementioned groups, the reader is referred to \cite{sank}.
 The group $PL_\kappa(\mathbb{R})$ is simple (see \cite{brin} and Theorem 3.1 of \cite{eps}).  The group $\widetilde{Diff}(\mathbb{S}^1)$ contains a free group of rank the continuoum (see \cite{grab}).  The Thomposon's group $F$ has many remarkable properties and arises in many different contexts in several branches of mathematics. We list below some properties of $F$: (i) the commutator subgroup $[F,F]$ is a simple group, (ii) every proper quotient group of $F$ is abelian, (iii) $F$ does not contain a non-abelian free subgroup (see \cite{cfp}).
Thus  we see that the group $QI(\mathbb{R})$ has a rich collection of subgroups having remarkable properties.  

However, the lattice of  {\it normal} subgroups of $QI(\mathbb R)$ does not seem to have been  studied.  As a first step in that 
direction we prove the following , which is the main result of this note.  
 
 \begin{theorem}\label{center}
The center of the group $QI(\mathbb{R})$ is trivial.
\end{theorem}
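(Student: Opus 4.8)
The plan is to deduce the triviality of the center from the direct product decomposition $QI^+(\mathbb{R}) \cong Q_+ \times Q_-$ together with the index-two inclusion $QI^+(\mathbb{R}) \le QI(\mathbb{R})$. The whole argument is organized around a single claim that carries all the analytic content: each factor $Q_+$ and $Q_-$ has trivial center. Granting this, triviality of $Z(QI(\mathbb{R}))$ follows by essentially formal group theory, so I will first isolate that reduction and then describe how I would attack the centerlessness of $Q_+$.

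For the reduction, let $\epsilon : QI(\mathbb{R}) \to \mathbb{Z}/2$ be the homomorphism recording whether a class fixes or interchanges the two ends $\pm\infty$, so that $\ker\epsilon = QI^+(\mathbb{R})$. The first step is to show that a central element $z$ must lie in $QI^+(\mathbb{R})$. Conjugation by any end-interchanging class carries the factor $Q_+$ (classes that are the identity near $-\infty$) onto $Q_-$ (classes that are the identity near $+\infty$); this is immediate from the effect of the flip $\sigma(x)=-x$ and the fact that conjugation by $QI^+(\mathbb{R})$ preserves each factor. If $z$ were central with $\epsilon(z)=1$, then conjugation by $z$ would be simultaneously the identity automorphism of $QI^+(\mathbb{R})$ and an automorphism swapping the two factors; since $Q_+ \cap Q_- = \{1\}$ (a class that is the identity near both ends is a bounded perturbation of the identity, hence trivial) and $Q_+ \ne \{1\}$, this is impossible. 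Hence $z \in QI^+(\mathbb{R})$. The second step is then automatic: $z$ commutes with all of $QI^+(\mathbb{R})$, so $z \in Z(QI^+(\mathbb{R})) = Z(Q_+) \times Z(Q_-)$, and once both factors are known to be centerless we conclude $z=1$.

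It remains to prove the key claim, say $Z(Q_+) = \{1\}$ (the statement for $Q_-$ follows by conjugating with $\sigma$). Here I represent a putative nontrivial central class $a$ by an increasing quasi-isometry $f$ that is the identity near $-\infty$; nontriviality means the displacement $f(x)-x$ is unbounded as $x \to +\infty$, while the quasi-isometry condition forces the slopes of $f$ to lie in a fixed interval $[1/K,K]$. Since $[a]$ commutes with a class $[g]$ exactly when $f\circ g - g\circ f$ is bounded, centrality of $a$ is the assertion that $\sup_x |f(g(x)) - g(f(x))| < \infty$ for every $g \in Q_+$. The plan is to contradict this by exhibiting a single bi-Lipschitz $g$, the identity near $-\infty$, whose commutator defect with $f$ is unbounded. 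The guiding model is the pure scaling $f(x)=2x$ for $x\ge 0$: choosing $g$ piecewise linear with $g(2^n)=2^n+n$ gives slopes $1+2^{-n}\to 1$, so $g$ is a genuine quasi-isometry, yet $g(f(2^n)) - f(g(2^n)) = 1-n \to -\infty$, so $a$ is not central.

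The main obstacle is to carry out this construction \emph{uniformly over all nontrivial} $f$, not merely for scalings. The difficulty is genuine: because a quasi-isometry has bounded multiplicative distortion, the conjugator $g$ cannot stretch space by more than a bounded factor, so the required unbounded discrepancy must be extracted from the unboundedness of $f-\mathrm{id}$ itself. Concretely, I would locate a sequence $x_n \to \infty$ along which $|f(x_n)-x_n| \to \infty$ and build $g$ so that it misaligns the scales at which $f$ accumulates its displacement --- for instance by inserting controlled slope variations near the points $x_n$ --- while keeping the slopes of $g$ bounded and equal to $1$ outside a neighborhood of $+\infty$. Verifying that such a $g$ always forces $f\circ g - g\circ f$ to be unbounded, and that the two sign cases of the displacement are handled symmetrically, is the technical heart of the proof; the rest is the formal reduction above.
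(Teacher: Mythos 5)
Your formal reduction is sound and is essentially the paper's: central elements must fix the two ends, the fixed-end subgroup splits as $Q_+\times Q_-$, and everything comes down to showing $Z(Q_+)$ is trivial. (The paper rules out end-swapping central elements by an explicit computation with a slope-$2$/slope-$1$ homeomorphism, while you use the cleaner observation that conjugation by an end-swapping class must carry $Q_+$ onto $Q_-$; both work.) Your model computation for $f(x)=2x$ is also correct.

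However, the proposal has a genuine gap exactly where you say the "technical heart" lies: you never construct, for an \emph{arbitrary} nontrivial $[f]\in Q_+$, a quasi-isometry $g$ with $\|f\circ g-g\circ f\|=\infty$, and "inserting controlled slope variations near the points $x_n$ to misalign the scales" is not yet an argument. The difficulty you correctly identify --- that $g$ must have bounded slopes while the discrepancy must be extracted from $f-\mathrm{id}$ alone --- is resolved in the paper by two concrete devices that are absent from your sketch. First, a lemma extracting from any nontrivial $f\in P_+$ (after possibly replacing $f$ by $f^{-1}$, which is legitimate since $[f^{-1}]$ is central whenever $[f]$ is --- this is how the two sign cases are actually handled) a \emph{sparse} divergent sequence $\{b_n\}$ with $b_{n+1}>3f(b_n)$ and $f(b_n)-b_n\to+\infty$. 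Second, an explicit conjugator: $g$ is the identity below $b_1$, maps each interval $J_k=[b_k,b_{k+1}]$ to itself, and has a single break point at $f(b_k)$ with $g(f(b_k))=(b_k+f(b_k))/2$. The sparseness condition $b_{k+1}>3f(b_k)$ is precisely what forces the slopes of $g$ into $[1/2,5/4]$, so $g$ is a legitimate element of $Q_+$; and since $g(b_k)=b_k$, one gets $f(g(b_k))-g(f(b_k))=(f(b_k)-b_k)/2\to\infty$, killing centrality. Without this (or an equivalent) explicit mechanism reconciling the bounded-slope constraint with the unbounded commutator, your proof is incomplete: the reduction is formal, and all of the content of the theorem lives in the step you left unverified.
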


Our proof uses of the description $QI(\mathbb R)$ as a quotient of 
a certain subgroup of the group of all PL-homoemorphisms of $\mathbb R$ due to Sankaran \cite{sank}.

\section{PL-homeomorphisms with bounded slopes}

Let $f:\mathbb{R}\to \mathbb{R}$ be any homeomorphism of $\mathbb{R}.$ We denote by $B(f)$ the set of break points of $f,$ i.e. points where $f$ fails to have derivatives and by $\Lambda(f)$ the set of slopes of $f$, i.e.
 $$\Lambda(f)=\{f^\prime(t):t\in \mathbb{R}-B(f)\}.$$
 Note that $B(f)\subset\mathbb{R}$ is discrete if $f$ is piecewise differentiable.\\
 \begin{definition}
 We say that a subset $\Lambda$ of $\mathbb{R}^*$ (the set of non-zero real numbers) is {\it bounded} if there exists a $M>1$ such that $M^{-1}<|\lambda|<M$ for all $\lambda\in \Lambda.$  
 \end{definition} 
We denote, the set of all piecewise linear homeomorphisms $f$ of $\mathbb{R}$ such that $\Lambda(f)$ is bounded, by $PL_\delta(\mathbb{R}).$ This forms a subgroup of the group $PL(\mathbb{R})$ of all piecewise linear homeomorphisms of $\mathbb{R}.$  The subgroup of $PL_{\delta}(\mathbb R)$ 
consisting of orientation preserving PL-homeomorphisms will be denoted $PL_{\delta}^+(\mathbb R)$.  We have $PL_{\delta}(\mathbb R)=
PL_{\delta}^+(\mathbb R)\ltimes \langle \rho\rangle$ where $\rho$ is the reflection of $\mathbb R$ about the origin.
The following theorem is due to Sankaran \cite{sank}.

\begin{theorem} \label{sankaran}
The natural homomorphism $\phi:PL_\delta(\mathbb{R})\to QI(\mathbb{R}),$ defined as $f\to[f]$ is surjective.
\end{theorem}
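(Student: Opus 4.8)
The plan is to prove surjectivity of $\phi$ by showing that every quasi-isometry class $[f]\in QI(\mathbb{R})$ contains a representative in $PL_\delta(\mathbb{R})$; since $\phi(g)=[g]$, this is exactly the assertion. Fix a $K$-quasi-isometry $f$ and a spacing $L$ with $L>5K^{2}$, and set $a_n:=f(nL)$ for $n\in\mathbb{Z}$. The upper quasi-isometry bound gives $|a_{n+1}-a_n|\le KL+K=:U$, so consecutive samples have bounded gaps. The lower bound gives the basic ``coarsely bounded level sets'' estimate: if $|f(s)-f(t)|\le 4K$ then $\tfrac1K|s-t|-K\le 4K$, i.e. $|s-t|\le 5K^{2}<L$; consequently $|a_n-a_m|>4K$ whenever $n\ne m$, and $|f(t)|\to\infty$ as $t\to\pm\infty$ (a bounded window of values pulls back to a bounded set).

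The decisive step, and the one I expect to be the main obstacle, is a coarse monotonicity statement for $f$. First, since $f$ is bounded on bounded sets and consecutive integers satisfy $|f(n+1)-f(n)|\le 2K$, the value sequence cannot oscillate between $+\infty$ and $-\infty$ at either end: otherwise it would re-enter the window $[-2K,2K]$ infinitely often (a discrete intermediate value argument, as steps of size $\le 2K$ cannot jump a window of width $4K$), contradicting the level-set bound. Hence $f(t)\to\pm\infty$ at each end, and coarse surjectivity rules out both ends tending the same way (that would leave $f$ bounded below or above). After composing with the reflection $\rho\in PL_\delta(\mathbb{R})$ if necessary, I may assume $f(t)\to+\infty$ as $t\to+\infty$ and $f(t)\to-\infty$ as $t\to-\infty$. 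Second, and this is the crux, I claim the ``overshoot'' is bounded: $a_n-a_m\le C_0:=K^{2}U$ whenever $n\le m$. Indeed, if $a_m<a_n$, let $m'>m$ be least with $a_{m'}\ge a_n$; then $a_{m'-1}<a_n\le a_{m'}\le a_{m'-1}+U$ gives $0\le a_{m'}-a_n<U$, so the lower bound forces $(m'-n)L/K-K<U$, i.e. $m'-n\le K^{2}$, and walking back from $a_{m'}$ in steps of size $\le U$ yields $a_m\ge a_{m'}-(m'-m)U\ge a_n-K^{2}U$.

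With coarse monotonicity in hand, the construction of a $PL_\delta^+(\mathbb{R})$ representative is routine. Let $\tilde a_n:=\max_{k\le n}a_k$ be the running maximum. It is non-decreasing, and the overshoot bound gives $0\le \tilde a_n-a_n\le C_0$, so $\sup_n|\tilde a_n-a_n|<\infty$. Moreover the lower bound together with the overshoot estimate shows that $\tilde a$ grows at a definite linear rate over windows of bounded length: if $a_{n+j}\le a_n$ then $jL/K-K\le a_n-a_{n+j}\le C_0$, so for $j$ larger than a constant $j_0=j_0(K,L)$ one has $a_{n+j}>a_n$ and hence $\tilde a_{n+j}-\tilde a_n\ge jL/K-K-C_0$. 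A standard mollification of $\tilde a$ at this scale produces a strictly increasing sequence $b_n$ with $0<M^{-1}\le (b_{n+1}-b_n)/L\le M$ for a suitable $M>1$ and with $\sup_n|b_n-\tilde a_n|<\infty$. Linearly interpolating the points $(nL,b_n)$ defines a homeomorphism $g$ of $\mathbb{R}$ that is piecewise linear with all slopes in $[M^{-1},M]$, i.e. $g\in PL_\delta^+(\mathbb{R})$.

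It remains to check $\sup_t|g(t)-f(t)|<\infty$, so that $[g]=[f]$. On $[nL,(n+1)L]$ the value $g(t)$ lies between $b_n$ and $b_{n+1}$, hence $|g(t)-b_n|\le|b_{n+1}-b_n|\le ML$, while $|f(t)-a_n|\le K|t-nL|+K\le U$; combining these with $\sup_n|b_n-a_n|<\infty$ bounds $|g(t)-f(t)|$ uniformly. Thus $\phi(g)=[g]=[f]$. Finally, if $f$ was orientation-reversing at the ends we applied $\rho$: having produced $g'\in PL_\delta^+(\mathbb{R})$ with $[g']=[f\circ\rho]$, the element $g:=g'\circ\rho\in PL_\delta(\mathbb{R})$ satisfies $[g]=[f]$. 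Hence every class is hit and $\phi$ is surjective.
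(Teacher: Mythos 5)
This statement is quoted in the paper as Sankaran's theorem, with a citation to \cite{sank} and no proof given; so there is nothing in the paper itself to compare your argument against, and your proposal has to stand as a self-contained proof --- which it does. Your strategy (sample $f$ on a sparse arithmetic progression, establish coarse monotonicity, repair the sampled sequence to a strictly increasing one at bounded distance, and PL-interpolate) is the standard route to this result and is in the same spirit as Sankaran's original argument. The two places where such a proof can go wrong are handled correctly: you never assume $f$ is continuous or monotone (the discrete intermediate-value argument with steps of size $\le 2K$ versus a window of width $4K$, and the level-set bound, are exactly the right substitutes), and the ``overshoot'' bound $a_n-a_m\le C_0$ for $n\le m$ is the genuine crux, since sparseness of the sampling alone does not force monotonicity of $(a_n)$.

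Two small points, neither a gap. First, the arithmetic in the overshoot step is off by a rounding: from $(m'-n)L/K-K<U=KL+K$ and $L>5K^2$ one gets $m'-n<K^2+2K^2/L<K^2+1$, so the clean conclusion is $m'-n\le K^2+1$ (or $\le\lfloor K^2\rfloor+1$), not $m'-n\le K^2$ when $K^2\notin\mathbb{Z}$; just take $C_0=(K^2+1)U$. Second, the ``standard mollification'' deserves one line to be airtight: the moving average $b_n:=\frac{1}{j_1}\sum_{i=0}^{j_1-1}\tilde a_{n+i}$, with $j_1$ a fixed integer exceeding your $j_0$, satisfies $b_{n+1}-b_n=(\tilde a_{n+j_1}-\tilde a_n)/j_1\in\bigl[(j_1L/K-K-C_0)/j_1,\,U\bigr]$ and $|b_n-\tilde a_n|\le j_1U$, which gives exactly the strictly increasing sequence with slope bounds and bounded distance that you need. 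With these touch-ups the proof is complete.
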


The kernel of $\phi$ in the above theorem equals the subgroup of all $f\in PL_\delta(\mathbb R)$ such that 
$||f-id||<\infty$. In particular $\ker(\phi)$ contains all translations.  It follows that the restriction of $\phi$ to the subgroup $PL_{\delta,0}
:=\{f\in PL_\delta(\mathbb R)\mid f(0)=0\}$ is surjective. 

\noindent{\bf Notations.} 
We shall denote by $PL_{\delta,0}(\mathbb R)$ by $P$ and $QI(\mathbb R)$ by $Q$. We denote by $P_+$ (resp. $P_-$) 
the subgroup of $P$ consisting of homeomorphisms which are identity near $-\infty$ (resp. $+\infty$).  Note that $P_\kappa:=P_+\cap P_-$ 
is the group of compactly supported homeomorphisms in $P$.  Similarly, $Q_+( \text{resp.}~ Q_-)$ denote the subgroups of $QI(\mathbb{R})$ consisting of elements which are identity near $-\infty$ and $+\infty$ respectively.  The subgroup of $P$ consisting of orientation preserving homeomorphisms 
will be denoted by $P^+$.   Similarly, $Q^+$ denotes the subgroup of $Q$ whose elements can be represented by orientation preserving 
homeomorphisms and we have $\phi(P^+)=Q^+$, $\phi(P_\pm)=Q_\pm$.  
 
We shall denote by the same symbol $\phi$ the restriction of $\phi: PL_{\delta}(\mathbb R)\to QI(\mathbb R)$ to $P$.
The group $P=PL_{\delta,0}$ contains no non-trivial translations and, as already noted, we have $P=P^+\ltimes \langle \rho \rangle$.
Similarly the group $Q=QI(\mathbb{R})$ is a semi-direct product 
$Q=Q^+\ltimes \langle [\rho]\rangle$.    
 Then $Q_+\cap Q_-=\phi(P_+\cap P_-)=\phi(P_\kappa)$ is trivial and so we have $Q^+=Q_+\times Q_-$.   Conjugation by $[\rho]$ interchanges $Q_+$ and $Q_-$. 
It follows that if $N\subset Q^+$ is normal, if and only if its projections $N_+, N_-$ to the factors $Q_+,Q_-$ are normal.  
Also $N$ is a normal subgroup of $Q$ if and only if $N$ is normalized by $Q^+$ and $[\rho]N[\rho]^{-1}=N$.    
\begin{remark}
It may be an interesting problem to decide whether the groups $Q_+$ or $Q_-$ are simple.
\end{remark}

\section{Center of the group $QI(\mathbb{R})$}
We begin by making a few preliminary observations concerning central elements in $Q=QI(\mathbb R)$. 
We first observe that any element $[f]\in Q$ in the center must fix the ends $+\infty, -\infty$.  Indeed we may assume that 
$f(0)=0$; that $f(t)>0$ if and only if $t<0$.  Suppose that $f$ is orientation reversing.  Let $h\in P=PL_{\delta,0}(\mathbb R)$ be the orientation preserving 
PL-homeomorphism of $\mathbb R$ which has exactly one break point at the origin and $h'(t)=2$ for all $t>0$ and $h'(t)=1$ for all $t<0$.  By a straightforward 
computation $h(f(t))-f(h(t))=2f(t)-f(t)=f(t)$ for $t<0$.  So $||h\circ f-f\circ h||$ is unbounded and we conclude that $f$ is not in the 
center. 
 
Any element $f\in P^+=PL_{\delta,0}^+(\mathbb R)$ may be decomposed as a composition 
$f=f_+\circ f_-$ with $f_+\in P_+, f_-\in P_-$, where 
 $f_+(t)=t, f_-(s)=s$ for $s\le 0\le t$.   It follows that 
$[f]=([f_+],[f_-])\in Q_+\times Q_-=Q^+$ 
is in the center if and only if $f_\pm$ is in the center of $Q_\pm$ and $[\rho \circ f_+ \circ \rho]=[f_-]$.  
We will show that the center of $Q_+$ is trivial.  We need the following lemma.
 
\begin{lemma}\label{subsequence}
Let $f\in P_+$ and $[f]\neq [id]$.  
Then there exists a strictly monotone divergent sequence $\{b_n\}$ of real numbers such that at least 
one of the following holds:\\
(1) $b_n\to+\infty, ~ b_{n+1}>3f(b_n)~\forall n$ and $f(b_n)-b_n\to+\infty,$\\
(2) $b_n\to+\infty, ~ b_{n+1}>3f^{-1}(b_n)~\forall n$ and $f^{-1}(b_n)-b_n\to\infty,$\\
\end{lemma}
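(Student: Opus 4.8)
The plan is to exploit the fact that $[f]\neq[\mathrm{id}]$ means $f$ is not quasi-isometrically equivalent to the identity, so $\|f-\mathrm{id}\|=\infty$. Since $f\in P_+$ fixes a neighborhood of $-\infty$ and $f(0)=0$, the unboundedness must occur as $t\to+\infty$: there is a sequence $t_n\to+\infty$ with $|f(t_n)-t_n|\to\infty$. Because $f$ is an orientation-preserving homeomorphism, at each such point either $f(t_n)>t_n$ (so $f$ outruns the identity) or $f(t_n)<t_n$ (so $f$ lags behind). By passing to a subsequence I may assume one of these two alternatives holds for all $n$, and these two cases will correspond respectively to conclusions (1) and (2) of the lemma. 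In the first case I set $b_n$ to be points witnessing $f(b_n)-b_n\to+\infty$; in the second case, the lag $t_n-f(t_n)\to\infty$ translates, after applying $f^{-1}$ (which is also in $P_+$ with bounded slopes), into $f^{-1}(s_n)-s_n\to+\infty$ for $s_n=f(t_n)$, giving the hypothesis of (2).

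The next step is to arrange the rapid-growth spacing condition $b_{n+1}>3f(b_n)$ (resp.\ $b_{n+1}>3f^{-1}(b_n)$) while preserving divergence of the gap. This is a routine thinning argument: having chosen $b_n$, I discard all subsequent terms of the witnessing sequence that are $\le 3f(b_n)$ and pick the next surviving term as $b_{n+1}$. Since the witnessing sequence diverges to $+\infty$ and $f(b_n)$ is a fixed finite number once $b_n$ is chosen, only finitely many terms are discarded at each stage, so infinitely many remain and the construction continues. Crucially, thinning a subsequence of a sequence along which $f(b_n)-b_n\to+\infty$ preserves that divergence, so both the spacing and the gap conditions hold simultaneously. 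The same extraction works verbatim in the second case with $f^{-1}$ in place of $f$.

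The main obstacle is establishing that the gap $|f(t_n)-t_n|$ genuinely diverges rather than merely being unbounded, and ensuring the correct case gives an \emph{additive} divergence statement. Here the boundedness of slopes of $f$ (that $f\in PL_\delta$) is what I expect to need: it prevents pathological oscillation and guarantees that once $f(t)-t$ is large at one point it cannot collapse too quickly, which is what lets me select a genuinely monotone divergent witnessing sequence. I would verify that $[f]\neq[\mathrm{id}]$ forces $\sup_t|f(t)-t|=\infty$ and then argue that the quantity $f(t)-t$ itself (as opposed to its oscillation) must be unbounded above or below; the bounded-slope hypothesis ensures that large values of $|f(t)-t|$ persist over intervals, so a divergent monotone subsequence can be extracted. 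Handling the case where $f(t)-t$ is unbounded below is where I convert to $f^{-1}$, using that $f^{-1}\in P_+$ inherits bounded slopes, thereby landing in conclusion (2).
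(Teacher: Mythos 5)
Your proposal is correct and follows essentially the same route as the paper's proof: produce a divergent witnessing sequence from $\sup_t|f(t)-t|=\infty$ (using that $f$ is the identity near $-\infty$, so the witnesses must go to $+\infty$), split into cases according to the sign of $f(t_n)-t_n$, greedily thin to obtain the spacing condition $b_{n+1}>3f(b_n)$, and convert the negative case into case (2) by running the argument for $f^{-1}\in P_+$. The only comment worth adding is that the \emph{obstacle} in your last paragraph is vacuous: unboundedness of $\sup_t|f(t)-t|$ already gives points $t_n$ with $|f(t_n)-t_n|>n$, so divergence along a sequence is automatic and no bounded-slope argument is needed (the paper's proof invokes none either).
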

\begin{proof}
Since $f(t)=t$ for $t\le n_1$ (for some $n_1<0$) and since $||f-id||$ is unbounded, we can find a strictly monotone divergent sequence $\{a_n\}$ of positive real numbers
 such that $|f(a_{n+1})-a_{n+1}|>|f(a_n)-a_n|~\forall n,~|f(a_n)-a_n|\to\infty.$    We obtain a subsequence of $\{a_n\}$ as follows: 
Set $a_{n_1}=a_1$. Choose $n_2\ge 2$ such that $f(a_{n_2})>\max\{a_1, 3f(a_1)\}$.  Having chosen $a_{n_j}, 1\le j\le k$, we choose $n_{k+1}>n_k$
such that $f(a_{n_{k+1}})>\max\{a_{n_k}, 3f(a_{n_k})\}$.  We set $c_k:=a_{n_k}$.  Since $|f(c_k)-c_k|\to \infty$ as $k\to \infty$, 
there has to be infinitely many values of $k$ for which $f(c_k)-c_k$ has the same sign.  If this sign is positive, we have a subsequence 
$\{b_k\}$ of $\{c_k\}$ which satisfies (1).  If this sign is negative, then we apply the above consideration to $f^{-1}\in P_+$ which 
yields a sequence $\{b_k\}$ satisfying (2). 
\end{proof}
 


The previous lemma will be the main tool to study the center of the group $QI(\mathbb{R}).$ 

\noindent 
{\it Proof of Theorem \ref{center}.}  As observed already, it suffices to show that the center of $Q_+$ is trivial.  If possible, suppose 
that $[f]\ne 1$ is in the center of $Q_+$ with $f\in P_+$.  Then $[f^{-1}]$ is also in the center and so, without loss of generality 
we may assume the existence of a sequence $\{b_n\}$ satisfying (1) of the above lemma.   One has a PL-homeomorphism 
such that 
(i) $g'(t)=t, t\le b_1$,\\
(ii)  $g(J_k)=J_k$ where $J_k:=[b_k,b_{k+1}]$, and, \\
(iii)  $g$ has exactly one break point at $f(b_k)\in J_k$ and $g(f(b_k))=(b_k+f(b_k))/2$.  
We claim that $g\in P_+$ and that $[g]$ and $[f]$ do not commute.


First, we compute the slopes of $g$.  Since $g(J_k)=J_k$ and since $g$ has exactly one break point in $J_k$ at $f(b_k)$, 
straightforward computation shows that $g'(t) = 1/2$ if $b_k<t<f(b_k)$, and, when $f(b_k)<t<b_{k+1}$, we have \\
$g'(t)=(b_{k+1}-(b_k+f(b_k)/2))/(b_{k+1}-f(b_k))\\=1+(f(b_k)-b_k)/2(b_{k+1}-f(b_k))\\<1+(f(b_k)-b_k)/2(2f(b_k))<1+1/4.$ 
Since $f(b_k)>b_k$ we also see that $g'(t)>1$.  So we conclude that for any $t$ at which $g$ is differentiable, we have 
$g'(t)\in [1/2, 5/4]$, showing that $g\in P_+$.  

Finally, $f(g(b_k))=f(b_k)$ whereas $gf(b_k)=(f(b_k)+b_k)/2$.  So $f(g(b_k))-g(f(b_k))=(f(b_k)-b_k)/2\to \infty$ as $k\to \infty$ 
and so $[f][g]\ne [g][f]$.  This completes the proof. \hfill $\Box$

\end{document}